\newtheorem{definition}{Definition}
\newtheorem{remark}{Remark}
\newtheorem{corollary}{Corollary}
\title{A Construction of Rational Seifert Surface in Lens Space}
\author{Han Zhang\footnote{PKU; 1901110026@pku.edu.cn}
}
\begin{document}

\maketitle

\begin{abstract}
In this note, we give a method to construct rational Seifert surface for those smooth or piece-wise linear oriented knots in Lens space $L(p,q)$. We assume that the oriented knot has a regular projection on Heegaard torus and then construct rational Seifert surface on twist toroidal diagram.
\end{abstract}

\section{Introduction}

~~~~The existence of Seifert surface of a null-homologous knot or link is a very interesting problem in topology. In chapter.5.A.4\cite{RF}, Rolfsen showed us a direct way to constructing Seifert surface by regular projection of a smooth or piece-wise linear knot. It's a natural question whether we can generalize Seifert surface of a link. In section 1 of\cite{KJ} ,Kenneth Baker and John Etnyre defined rational Seifert suface for a knot which represents a torsion element in homology group $H_1$. Especially, $H_1(L(p,q))=\mathbf{Z}_p$. Thus,every knot represents a torsion element in homology group. We give a construction of rational Seifert surface for arbitrary smooth knot when it has a regular projection on Heegaard torus of $L(p,q)$.
We assume that all knots mentioned in this note are smooth or piece-wise linear.

\section{Representation of a smooth knot in L(p,q)}
~~~~Let $V_{i}, (i=1, 2)$ be two solid torus $D^2\times S^1 $. Its meridian and longitude is denoted by$(\mu_i, \lambda_i)$. Then, in the sense of Heegaard decomposition, a lens space $L(p,q)$ can be described by $V_1\cup_\phi V_2$ where the gluing map $\phi:\partial V_2\rightarrow V_1$ ~is an orientation-reversing diffeomorphism given in standard longitude-meridian coordinates on the torus by the matrix
\\
$$
\begin{pmatrix}
-q&q'\\
p&-p'
\end{pmatrix}\in -SL_2(\mathbf{Z})$$\\
In particular, $\phi(\mu_2)=-q\mu_1+p\lambda_1$. This fact concludes that $H_1(L(p,q))=\langle \lambda_1 |~p\lambda_1=1 \rangle$. \\
~
\\~~~  Let $K$ be a knot in Lens space $L(p,q)$. Of course, after a small perturbation, it can be disjoint from the core $C_i=0\times S^1\subset D^2\times S^1 $of two solid torus at the same time. Please notice that $V_i\setminus C_i$ deformation retracts to its boundary$~\partial V_i$. Thus, the deformation retraction $P:L(p,q)\setminus V_1\cup V_2\rightarrow \partial V_1 $ projects $K$ onto Heegaard torus $\partial V_1$
\begin{definition}{(see chapter 3.E of \cite{RF})}\\
Assume K is a smooth knot. The deformation retraction $P$ is said to be \textbf{regular} for $K$ iff :\\
$\forall x \in \partial V_1, ~|P^{-1}(x)|=0,1,2$ and if  $2$, $P(K)$ intersects itself transversely at $x $
\end{definition}
\begin{remark}\label{remark1}
if P is not regular for K, then, after a small perturbation of K, P is regular. From now on, We assume K is in the interior of thickened torus $\partial V_1\times [-1,1]$ and the natural projection $\partial V_1\times [-1,1]\rightarrow \partial V_1$~is regular for K. We regard $L(p,q)$ is obtained from  $\partial V_1\times [-1,1]$ gluing $V_1$ to the lower boundary of this thickened torus and $V_2$ to the upper boundary.
\end{remark}
After above discussions, the reader can realize that such a knot K can be drawn on a fundamental domain of torus $\partial V_1$.Notice that $\partial V_1=T^2=\mathbf{R^2}/\mathbf{Z^2}$. The usual choice of fundamental domain of this torus is a square $[0,1]\times[0,1]\subset \mathbf{R^2}$. In this square, $[0,1]\times \{0\}$ represents $\mu_1$ while $\{0\} \times[0,1]$ represents $\lambda_1$
\begin{definition}{(see Def 2.1 of \cite{GD})}\\
The \textbf{twist toroidal diagram} of $\partial V_1\subset L(p,q)$ is a fundamental domain in $\mathbf{R^2}$ bounded by four straight line:\\
$$\begin{cases}
x=0\\
x=1\\
y=-\frac{q}{p} x\\
y=-\frac{q}{p}(x-1)
\end{cases}$$
\end{definition}
\begin{remark}
In twist toroidal diagram, it's also holds that $(0,1)(0,0)(1,0)$ represent a same point in $\partial V_1$.
The straight line $y=-\frac{q}{p}x $ has same direction as $\mu_2$.\end{remark}

\section{Construction of rational Seifert surface}
\subsection{Basic Idea}
~~~~By remark \ref{remark1}, we can draw $K$ on the twist toroidal diagram of $\partial V_1$. We want to find a "cobordism" surface (inside of $\partial V_1\times[-1,1]$) from $rK$ to a link $L'$ which is the union of several $(\pm\mu_2)-knot$ in $\partial V_1\times \{1\}$ and $(\pm\mu_1)-knot$ in $\partial V_1\times \{-1\}$. Then we attach several meridian discs of $V_i$ to this "cobordism", this so called "cobordism" should be a real rational Seifert surface of $K$. We will see later that $L'$ may contain several null-homologous component on the upper boundary of $\partial V_1\times[-1,1]$.
\subsection{Details of the construction}
The construction is divided into following steps:
\begin{enumerate}
    \item Replace crossings of $P(K)$ by short-cut arcs on the twist toroidal diagram. Or equivalently, cut the crossing point $A$ into two points $A_{0,1}$. Then, we get a torus link $L\subset\partial V_1\times\{0\}$ %记得画插图，对于不同的writhe是怎么变的？
\begin{figure}[H]\label{step1}
    \centering
    \includegraphics[scale=0.28]{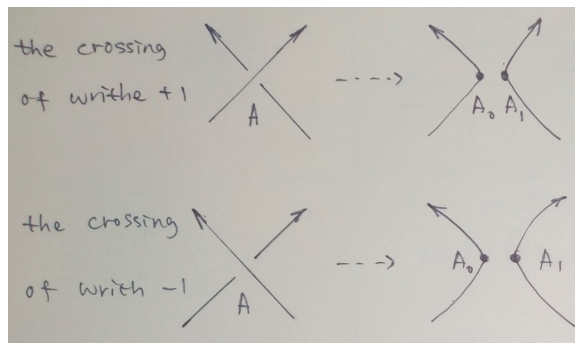}
    \caption{Make a crossing apart}
    
\end{figure}
    \item Computations:\\
    Compute $[K]=[L]\in H_1(\partial V_1)$ in coordinate $(\mu_1, \lambda_1)$. Assume that $[L]=n(a\mu_1+b\lambda_1)$ where $n,a,b\in \mathbf{Z}, g.c.d.(a,b)=1$. The coefficient $na(nb)$ and can be obtained by counting the algebraic intersection numbers of $L$ and $\lambda_1(\mu_1)$-curve.
    \\Also, Compute order $r$ of $[K]=[L]\in H_1(L(p,q))=\langle\lambda_1|p\lambda_1\rangle$. \\$$r=\frac{p}{g.c.d.(p, nb)}$$\\
    Then, $$r[L]
    =rna\mu_1+rnb\lambda_1
    =rna\mu_1+\frac{rnb}{p}(p\lambda_1)
    =rna\mu_1+\frac{rnb}{p}(q\mu_1+\mu_2)
    =(rna+\frac{rnbq}{p})\mu_1+\frac{rnb}{p}\mu_2$$
    \item Construct "cobordism" from link $L$ to $L'$ noticed above.
    \begin{enumerate}
        \item draw torus link $(rna+\frac{rnbq}{p})\mu_1$ on $\partial V_1\times\{-1\}$ (denoted by$L^-$)and $(-(rna+\frac{rnbq}{p})\mu_1)$ on $\partial V_1\times\{1\}$ s.t both torus link avoid a connected neighborhood of each crossing of $P(K)$ in the diagram where the crossing is now replaced by short-cut arcs.
        \begin{figure}[H]
            \centering
            \includegraphics[scale=0.25]{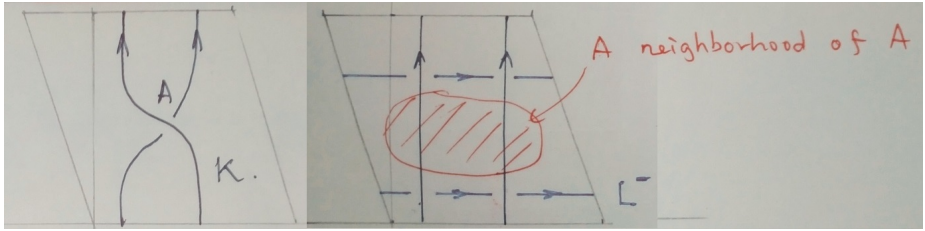}
            \caption{Here is a knot K in L(3,1), $[L]=2\lambda_1, r=3, r[L]=2\mu_1+2\mu_2$. The blue line $L^-$ a}
        \end{figure}
        For convenient, $(-(rna+\frac{rnbq}{p})\mu_1)$ on $\partial V_1\times\{1\}$ should be drawn a little bit above the $(rna+\frac{rnbq}{p})\mu_1$ on the diagram.
         \begin{figure}[H]
            \centering \includegraphics[scale=0.25]{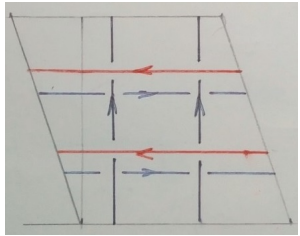}
            \caption{the red line of homotopy type $(-2\mu_1)$ is not far away from the blue.}
        \end{figure}%配图叙述避开小邻域的含义
        \item draw torus link $rL$ on $\partial V_1\times\{1\}$. Here, $rL$ is r parallel copies of L. For convenience, one shouldn't draw $rL$ too far away from $L$.
        \begin{figure}[H]
            \centering \includegraphics[scale=0.25]{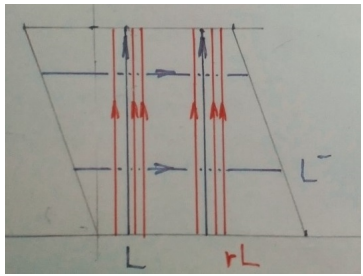}
            \caption{the red line $rL$ is far from L in the diagram we draw on.}
        \end{figure}%配图叙述别画的太远的含义
        \item At each intersection of $(-(rna+\frac{rnbq}{p})\mu_1)$ and $rL$ on $\partial V_1\times\{1\}$, replace intersection by smooth arc shown by the graph below.
        \begin{figure}[H]
            \centering \includegraphics[scale=0.3]{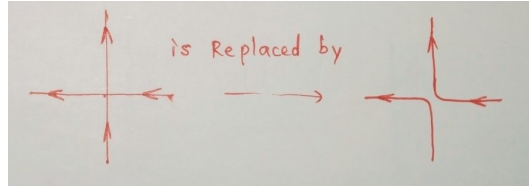}
            \caption{the other cases it quite similar.}
        \end{figure}
        Then, we get a link $L^+$ on $\partial V_1\times\{1\}$ with homology class $[L^+]=r[L]-(rna+\frac{rnbq}{p})\mu_1=\frac{rnb}{p}\mu_2 $. Therefore, its components is torus knot of $\pm\mu_2$ type or null-homologous (simple closed curve on torus). $L'$ is the union of $L^+$ and $L^-$ \begin{figure}[H]
            \centering \includegraphics[scale=0.3]{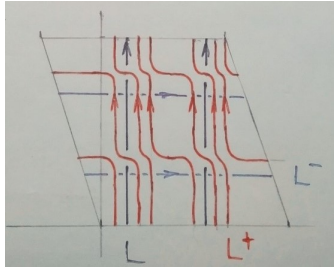}
            \caption{ the black is link $L$, the red is $L^+$ and the blue is $L^-$ }
        \end{figure}%配图叙述交叉点的光滑化
        \item The "cobordism" of $L$ is actually bounded by $L$ and $L'$. Near the intersection of $L$ and $(rna+\frac{rnbq}{p})\mu_1$ link on the diagram, the "cobordism" is glued by the bands below. Outside the neighborhood, the "cobordism" is obtained by gluing r bands along $L$
        \begin{figure}[H]
            \centering \includegraphics[scale=0.28]{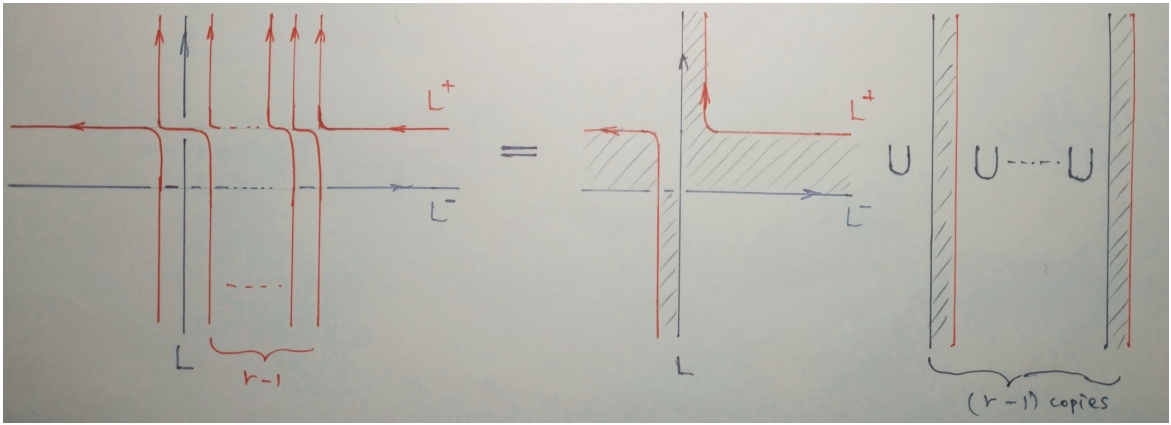}
            \caption{the other cases are quite similar with this figure}
        \end{figure}%画图说明其余情况同理
        \item For a very special case when $[L]=0\in H_1(\partial V_1)$, $L'=\emptyset$ and $L$ consists of m($m\geq 0$) non-trivial torus knots of type $a\mu_1+b\lambda_1$, m torus knots of type $-(a\mu_1+b\lambda_1)$ and several null-homologous knots on torus. We construct disjoint m bands (i.e $S^1\times I$) and several discs bounded by null-homologous components of $L$ 
    \end{enumerate}
    \item Construct r-cover half-twist band as follow. Let $I\times I\times\{1,2,\dots ,r\}$ be k-copies of a square. Define equivalent relationship $\sim $ by: $(x,0,1)\sim (x,0,k)$ and $(x,1,1)\sim (x,1,k)$.
    \begin{figure}[H]
            \centering \includegraphics[scale=0.3]{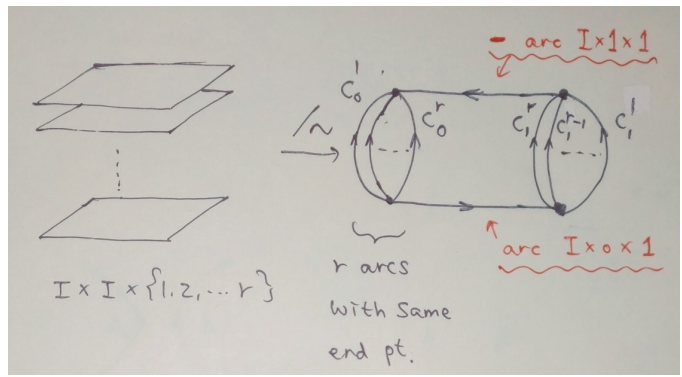}
            \caption{the other cases are quite similar with this figure}
        \end{figure}
        Then do a half-twist along straight line $I\times\{\frac{1}{2}\}\times\{0\}$ on the quotient space $I\times I\times\{1,2,\dots ,r\}/\sim$, the construction of r-cover half-twist band is done. Name arc $\{i\}\times I\times\{k\}$ by $c_i^k$ where $i=0,1; k=1,2,\dots,r$.\begin{figure}[H]
            \centering \includegraphics[scale=0.3]{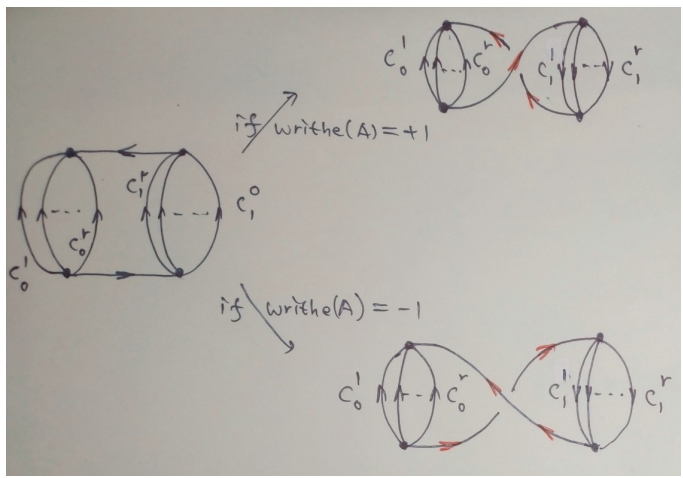}
            \caption{there are two type of r-cover half-twist band}
        \end{figure}%配图r-cover half-twist band以及对应于writhe不同的两种扭转方式
    \item In the first step, we cut apart the crossings (denoted by A) of $P(K)$ into two points $A_{0,1}$. \begin{figure}[H]
            \centering \includegraphics[scale=0.25]{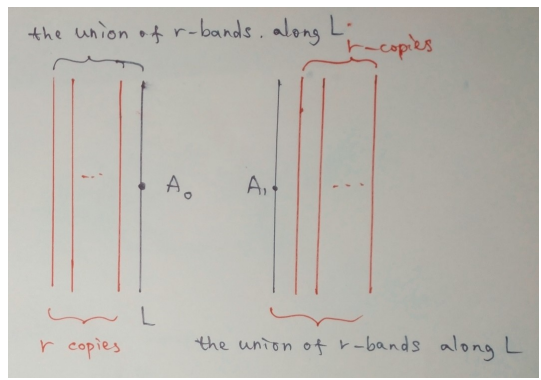}
            \caption{locally, the cobordism looked like above. Each local component is obtained by gluing r bands along L}
        \end{figure}Now we cut off a 3-ball $B_i$ of a very small radius centered at each $A_{i=0,1}$ from the "cobordism" constructed above. The boundary of 3-ball $\partial B_i$ intersects the cobordism at r arcs with same endpoints. These arcs is denoted by $\gamma_i^k$ where $i=0,1; k=1,2,\dots,r$. 
        \begin{figure}[H]
            \centering \includegraphics[scale=0.3]{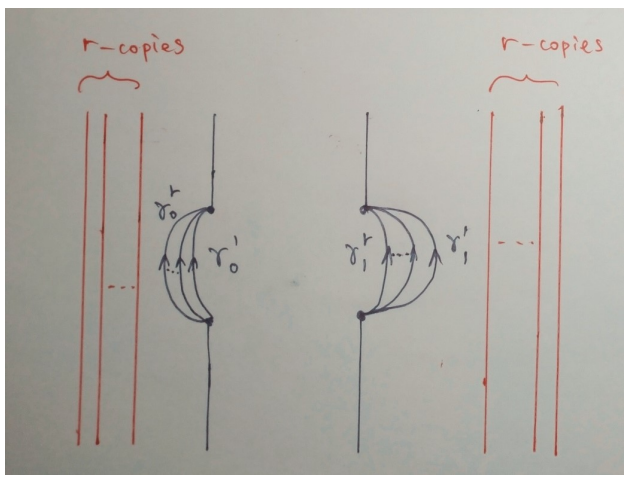}
            \caption{ $\gamma_i^k$ is marked in the figure}
        \end{figure}Now we attach r-cover half-twist band to the punctured cobordism described above by regarding $\gamma_0^k$ as $c_0^k$ and $\gamma_1^k$ as $-c_1^k$, $k=1,2,\dots,r$. One should take care that the type of r-cover half-twist band to be glued is depended on the writhe of this crossing. Then we get the cobordism from $rK$ to $L'$.%配图：割开交点是什么样子？以及是怎么粘回去的。配图说明：只画出一种writhe的情形，另一种类似 
    \item Now we get the cobordism from $rK$ to $L'$. We gluing meridian discs of $V_1$ along $L^-$, and meridian discs of $V_2$ along the $\pm\mu_2$-type component of $L^+$.For those null-homologous component of $L^+$, we glue the discs bounded by them ,probably with a little push off the diagram s.t.the discs are disjoint.
\end{enumerate} 
Now we get a rational Seifert surface of $K$. It's not hard to compute its Eular characteristic. Also, we can find out how it wraps on K. See corollary below
\begin{corollary}
Let $K$ be a knot in the interior of $\partial V_1\times I$ with homotopy type $[K]=n(a\mu_1+b\lambda_1)$ where $n,a,b\in \mathbf{Z}, g.c.d.(a,b)=1$. Let $NK$ be a tubular neighborhood of $K$ with framing $(\mu_{NK},\lambda_{NK})$. Choose the longitude $\lambda_{NK}$ of $NK$ to be the one induced from the push-off of K along the positive direction of $I$. Then, the rational Seifert surface of $K$ intersects $\partial NK$ at a torus link with homology type: $$r\lambda_{NK}-(rn^2(a+\frac{bq}{p})b+r writhe(K))\mu_{NK}$$  where the writhe of K is the sum of index defined in the graph of the first step\ref{step1}.
\end{corollary}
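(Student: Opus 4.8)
Write $S$ for the rational Seifert surface produced by the construction. The plan is to identify $S\cap\partial NK$ with the boundary of $S$ once the capping discs of the last step are attached, and then to pin down its class in $H_1(\partial NK)$ by two intersection computations: one against a meridian disc of $NK$, which gives the $\lambda_{NK}$-coefficient, and one against a push-off of $K$ in the $+I$ direction, which gives the $\mu_{NK}$-coefficient. The second of these is where the work is. For the set-up: after the last step $S$ is a compact surface properly embedded in the exterior $E:=L(p,q)\setminus\operatorname{int}(NK)$ with $\partial S=S\cap\partial NK=:c$, because the only arcs of the cobordism from $rK$ to $L'$ not lying on $L'$ run along $rK$, hence on $\partial NK$, while every component of $L'=L^-\cup L^+$ has been capped---a meridian disc of $V_1$ along each $\mu_1$ of $L^-$, a meridian disc of $V_2$ along each $\pm\mu_2$-component of $L^+$, and an embedded disc (pushed off the diagram) along each null-homologous component of $L^+$. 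Write $[c]=\alpha\lambda_{NK}+\beta\mu_{NK}\in H_1(\partial NK)$.

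The coefficient $\alpha$ is immediate: by Steps 3(b), 4 and 5, in a tubular neighbourhood of $K$ the surface $S$ consists of the $r$ parallel bands of Step 3(b) away from the crossings, joined at each crossing by an $r$-cover half-twist band, so $c$ meets every meridian disc of $NK$ in exactly $r$ coherently oriented points and $\alpha=r$. (Equivalently, $r=\operatorname{ord}[K]$ in $H_1(L(p,q))$, and the boundary of any rational Seifert surface of that order has longitudinal multiplicity $r$.)

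For $\beta$, abbreviate $N:=rna+\tfrac{rnbq}{p}$, the integer from Step 2, so $r[L]=N\mu_1+\tfrac{rnb}{p}\mu_2$ and $L^-$ is $|N|$ copies of $\mu_1$; and let $\widehat K$ be a curve in the interior of $E$ that is isotopic there to $\lambda_{NK}$ (concretely, $K$ pushed a little in the $+I$ direction). With the standard orientation of $\partial NK$, $\beta=-\,[S]\cdot[\widehat K]$, the pairing $H_2(E,\partial E)\times H_1(E)\to\mathbf Z$ computed by a signed count of transverse points. Decompose $S$: the meridian discs of $V_1$ lie in $V_1$ (attached along $\partial V_1\times\{-1\}$), the meridian discs of $V_2$ lie in $V_2$ (attached along $\partial V_1\times\{1\}$), and the capping discs of the null-homologous components may be isotoped into $\partial V_1\times\{1\}$, so all of them are disjoint from $\widehat K\subset\operatorname{int}(\partial V_1\times(-1,1))$ and $[S]\cdot[\widehat K]$ is carried entirely by the cobordism together with its $r$-cover half-twist bands. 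Isotoping $\widehat K$ into a thin collar of $K$, with projection running along the resolved link $L$ slightly displaced, the intersection points fall into two families: those near the $|N\cdot nb|$ points where $L$ meets $L^-$ in the twist toroidal diagram, coming from the special bands of Step 3(d), each contributing $\pm1$ with a common sign and hence $N\cdot nb=rn^{2}b(a+\tfrac{bq}{p})$ in total; and those near the crossings of $P(K)$, where the $r$-cover half-twist band attached at a crossing of index $\varepsilon=\pm1$ (from Step 1) contributes $r\varepsilon$, and hence $r\operatorname{writhe}(K)$ in total. Adding, $[S]\cdot[\widehat K]=rn^{2}b(a+\tfrac{bq}{p})+r\operatorname{writhe}(K)$, whence $[c]=r\lambda_{NK}-\bigl(rn^{2}(a+\tfrac{bq}{p})b+r\operatorname{writhe}(K)\bigr)\mu_{NK}$, as claimed. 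The degenerate case $[L]=0\in H_1(\partial V_1)$ of Step 3(f) is the same: then $N=0$, $L^-=\emptyset$, the first family is empty, the bands and discs of Step 3(f) again miss $\widehat K$, and only $r\operatorname{writhe}(K)$ survives.

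The main obstacle is these two local intersection counts and the orientation bookkeeping around them: that the Step 3(d) band at an $L\cap L^-$ point contributes $\pm1$ with a sign constant over all $|N\cdot nb|$ points; that the $r$-cover half-twist band at a crossing of index $\varepsilon$ contributes exactly $r\varepsilon$ to $[S]\cdot[\widehat K]$ (the $r$-fold cover multiplying the half-twist contribution $\varepsilon$ of the ordinary Seifert algorithm); and, most delicately, that the slope $-\tfrac{q}{p}$ shear of the twist toroidal diagram enters only through the value $N=rn(a+\tfrac{bq}{p})$ of the $\mu_1$-coefficient of $r[L]$, so that the lens-space correction emerges exactly as the summand $\tfrac{bq}{p}$ rather than some other rational multiple of $q/p$. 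A useful consistency check is $p=1$, $q=0$ (so $L(p,q)=S^{3}$) with $K$ a torus knot in standard position: then $r=1$, $\operatorname{writhe}(K)=0$, and the formula reduces to the classical fact that the Heegaard-torus framing of the $(a,b)$-torus knot differs from its Seifert framing by $ab$.
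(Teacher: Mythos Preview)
Your proof is correct and follows the same decomposition as the paper: the meridional coefficient splits into a contribution $rn^{2}(a+\tfrac{bq}{p})b$ from the Step~3 cobordism of $L$ (which you compute via the intersections of $\widehat K$ with the $N$ vertical $\mu_1$-annuli) and a contribution $r\cdot\operatorname{writhe}(K)$ from the $r$-cover half-twist bands of Steps~4--5. The paper's own proof is only a two-line sketch asserting exactly these two contributions, so your argument is essentially a fleshed-out version of it, with the linking-number/intersection-pairing framework making explicit what the paper leaves to the reader.
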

\begin{proof}
the proof is not difficult noticing that the construction of cobordism of $L$ devotes $$-rn^2(a+\frac{bq}{p})b\mu_{NK}$$ and the attachment of r-cover half-twist bands devotes $$-rwrithe(K)\mu_{NK}$$.
\end{proof}
%引用的时候用\cite{RF}即可
\section{Acknowledgement}
I would like to thank YouLin Li from SJTU. Without his help, I would not complete this thesis.

\end{document}